\newcolumntype{C}{>{\displaystyle}>{$}c<{$}}
\newcolumntype{L}{>{\displaystyle}>{$}l<{$}}
\newcolumntype{R}{>{\displaystyle}>{$}r<{$}}
\DeclareMathOperator{\codim}{codim}
\DeclareMathOperator{\id}{id}
\DeclareMathOperator{\ord}{ord}
\DeclareMathOperator{\supp}{supp}
\DeclareMathOperator{\ic}{ic}
\newcommand{\p}{\partial}
\newtheorem{conjecture}{Conjecture}[section]
\newtheorem{corollary}[conjecture]{Corollary}
\newtheorem{definition}[conjecture]{Definition}
\newtheorem{example}[conjecture]{Example}
\newtheorem{lemma}[conjecture]{Lemma}
\newtheorem{proposition}[conjecture]{Proposition}
\newtheorem{remark}[conjecture]{Remark}
\newtheorem{theorem}[conjecture]{Theorem}
\title{Counting $D_4$ singularities in the image of a wave front}
\author{C. Mu\~noz-Cabello \orcidlink{0000-0002-3115-1851}, J.J. Nu\~no-Ballesteros \orcidlink{0000-0001-6725-541X}, R. Oset Sinha \orcidlink{0000-0002-5652-7982}}
\address{Departament de Matem\`{a}tiques,
Universitat de Val\`encia, Campus de Burjassot, 46100 Burjassot,
Spain}
\email{Christian.Munoz@uv.es}
\email{Juan.Nuno@uv.es}
\email{Raul.Oset@uv.es}
\thanks{Work of C. Mu\~noz-Cabello, Juan J. Nu\~no-Ballesteros and R. Oset Sinha partially supported by Grant PID2021-124577NB-I00 funded by MCIN/AEI/ 10.13039/501100011033 and by ``ERDF A way of making Europe"}
\subjclass[2020]{Primary 32S30; Secondary 32S25, 58K60}
\keywords{frontals, wave fronts, invariants of mappings}
\begin{document}
\maketitle

\begin{abstract}
	We give a formula to count the number of $D_4$ singularities in a stable frontal perturbation of a corank $2$ wave front singularity $f\colon (\mathbb{C}^3,0) \to (\mathbb{C}^4,0)$ using Mond's method of stable perturbations of map germs.
	For a generic germ of corank $2$ wave front $f\colon (\mathbb{C}^3,S) \to (\mathbb{C}^4,0)$, the image of a stable deformation $f_t$ of $f$ exhibits $A_k$ singularities with $k \leq 4$, their transverse intersections and the aforementioned $D_4$ singularities for $0 < |t| \ll 1$.
	By interpreting the image of $f_t$ as the discriminant (the image of the critical point set) of a smooth map germ $H_t\colon (\mathbb{C}^5,0) \to (\mathbb{C}^4,0)$, we define an algebra whose dimension over $\mathbb{C}$ is equal to the number of $D_4$ points in the image of $f_t$.
\end{abstract}

\section{Introduction}
A wave front is a smooth map germ $f\colon N^n \to Z^{n+1}$, with $N$, $Z$ complex manifolds, which can be lifted onto a Legendrian immersion $\tilde f\colon N \looparrowright PT^*Z$ via a Legendrian fibration $\pi\colon PT^*Z \to Z$, where $PT^*Z$ is the space of contact elements on $Z$.
For $n=3$, a generic frontal perturbation of $f$ (in the sense of Definition \ref{frontal unfolding}; see also \cite{MNO_Mond}) exhibits isolated $D_4$ singularities near the origin, similar to how quadruple points manifest on a generic perturbation of a holomorphic mapping $U \subseteq \mb{C}^3 \to \mb{C}^4$.
Our goal is to provide a formula to count the number of $D_4$ singularities emerging in this arrangement.

The problem of counting isolated singularities in a stable deformation of a map germ can be traced back to \cite{Mond_Classification}, where Mond gives a formula to count the number of cross cap singularities in a stable $1$-parameter deformation $(f_t)$ of $f$, which he calls a \emph{generic perturbation} of $f$.
This method inspired a series of articles in the following years \cite{FukudaIshikawa, MondGaffney, MondGaffney2, Rieger}, in which the authors apply Mond's method of generic perturbations to smooth map germs $G\colon (\mb{C}^2,0) \to (\mb{C}^2,0)$, with the aim of finding formulas and relations between the isolated singularities in the discriminant curve of $G$.
Fukui, Nuño-Ballesteros and Saia also define the $c_\mathbf{i}(f)$ invariant in \cite{FNBS_95, FNBS_98}, which counts the number of isolated Thom-Boardman singularities of type $\Sigma^\mathbf{i}$ for a smooth $f\colon (\mb{C}^n,0) \to (\mb{C}^p,0)$.
The results regarding discriminant curves of smooth map germs were later extended by Marar, Montaldi and Ruas in \cite{MMR_Schemes}, where they present a general method to count singularities of type $A_{j_1}\dots A_{j_k}$ with $j_1+\dots+j_k=n$ in the discriminant of a corank $1$ map germ $G\colon (\mb{R}^n,0) \to (\mb{R}^n,0)$.

Arnold and his colleagues from the Russian school of singularities \cite{Arnold_I} proved that every Legendrian submanifold can be written in terms of a smooth function $g\colon \mb{C}^d \to \mb{C}$ with smooth critical set.
In our setting, this means that every wave front $f\colon (\mb{C}^n,0) \to (\mb{C}^{n+1},0)$ can be written as the discriminant of a corank $1$ map germ $G\colon (\mb{C}^{n+d},0) \to (\mb{C}^{n+1},0)$ with smooth critical set for some $d > 0$.
Using this characterisation of wave fronts and the formulas in \cite{MMR_Schemes}, we are capable of counting the number of corank $1$ singularities emerging on a generic frontal perturbation of $f$.

The number of isolated singularities of a given type in a frontal perturbation of $f$ is not only an analytic invariant, but can also be used to compute other numerical invariants, such as the frontal Milnor number of $f$ (see \cite{MNO_Mond}).
In \cite{MNO_Surfaces}, we use the number of isolated singularities of a frontal surface to give an expression for the frontal Minor number of $f$ (see also Example \ref{swallowtail}) and the Milnor numbers of the image curves of singular and transverse double points.
We also related the frontal Milnor number of an analytic plane curve to its image Milnor number in \cite{MNO_Frontals}, using the number of singular points in a generic frontal perturbation.

For $n=3$, we see the emergence of the first corank $2$ singularity in the classification of stable wave fronts, namely $D_4$, which can be defined as any map germ $f\colon (\mb{C}^3,0) \to (\mb{C}^4,0)$ which is $\ms{A}$-equivalent to 
\begin{equation}\label{D4 parametrisation}
	f(u,v,w)=(u, v w, 2 u v + 3 v^2 + w^2, u v^2 + 2 v^3 + 2 v w^2).
\end{equation}
This complex singularity splits into two real singularities, which Arnold denotes as $D_4^+$ and $D_4^-$.
On Figure \ref{fig: D4}, we have illustrated the projections of $D_4^+$ and $D_4^-$ onto the hyperplane of equation $X=0$ (in coordinates $(X,Y,Z,T)$) for different values of $u$.
These illustrations are reproductions of those found in \cite{Arnold_I}, \S 22.1.

\begin{figure}[ht]
	\includegraphics[width=.8\textwidth]{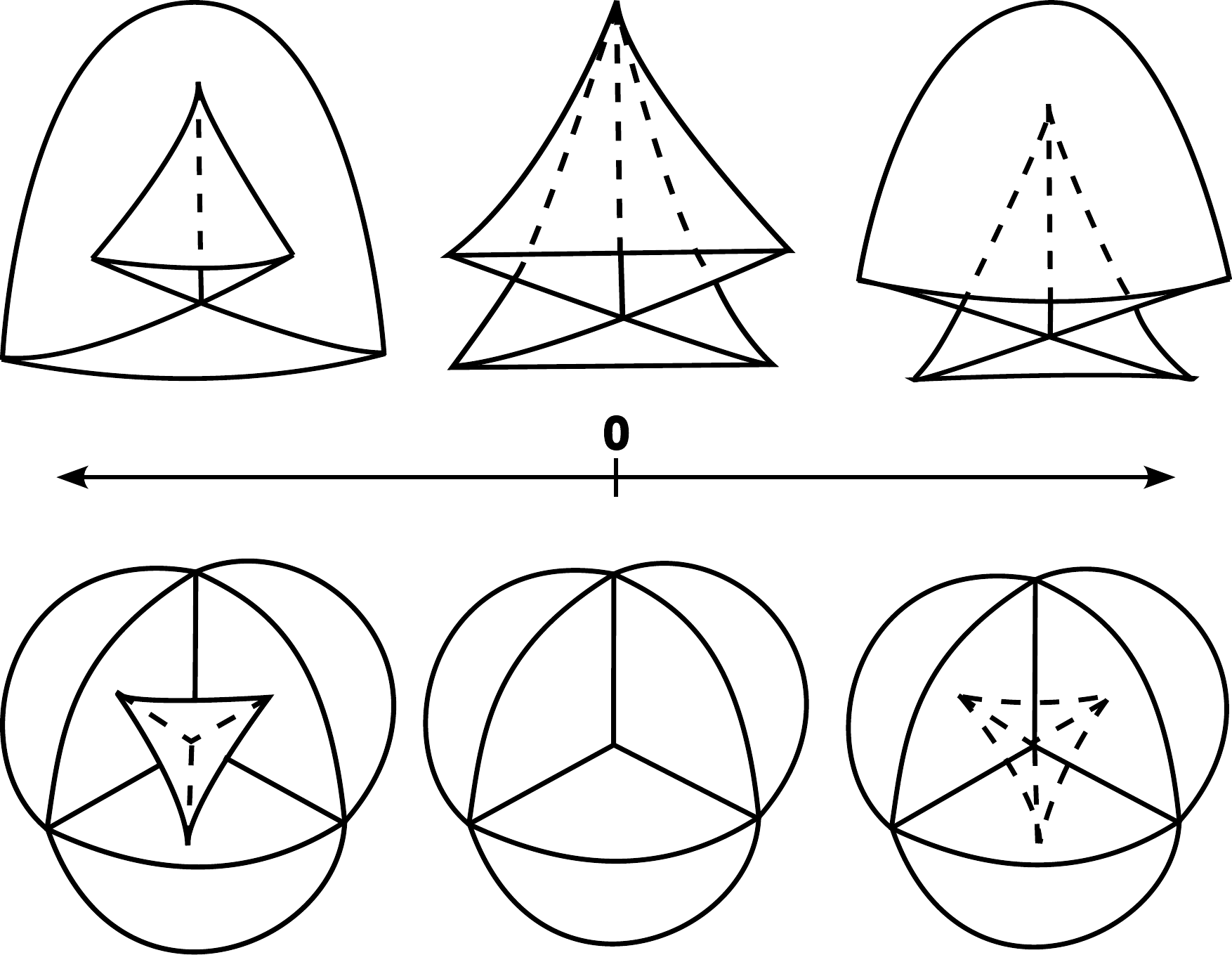}
	\caption{Projections of the $D_4^+$ (top) and $D_4^-$ (bottom) singularities onto the hyperplane $X=0$.
	Source: \cite{Arnold_I}, \S 22.1. \label{fig: D4}}
\end{figure}
	
In Section 2, we present some background information on germs of wave fronts using the theory of deformations of frontal map germs we developed in \cite{MNO_Frontals}.

In Section 3, we define for a corank $2$ wave front $f\colon (\mb{C}^3,0) \to (\mb{C}^4,0)$ given in the form $f(u,v,w)=(u,p(u,v,w),q(u,v,w),r(u,v,w))$ the algebra
	\[\frac{\ms{O}_3}{\langle p_v, p_w,q_v,q_w\rangle},\]
where $g_v$, $g_q$ denote the partial derivatives of the function $g$ with respect to $v$ and $w$, and show that the $\mb{C}$-dimension of this space is equal to the number of $D_4$ points in a generic frontal perturbation of $f$.
The authors are unaware of any other formula in the literature that counts the number of isolated corank $2$ singularities on the image of a wave front.
Combining this algebra with the formulas given in \cite{MMR_Schemes}, one can then count any isolated singularity in a generic frontal perturbation of $f$ by finding the associated map germ $G\colon (\mb{C}^{n+2},0) \to (\mb{C}^{n+1},0)$, a procedure which has been detailed in \cite{MNO_Mond}.

The idea for this article was initially proposed by professor T. Ohmoto to the first author, asking whether it is possible to give a formula to count the number of $D_4$ singularities in a generic frontal perturbation of a frontal map germ $f\colon (\mb{C}^3,0) \to (\mb{C}^4,0)$.
Since the list of corank $2$ stable singularities of frontals is not known in dimension $3$, we decided to narrow the scope of the formula to only germs of wave fronts, resulting in this article. 

\section{Wave fronts}
Let $PT^*\mb{C}^{n+1}$ be the projectivized cotangent bundle of $\mb{C}^{n+1}$.
If $(z,[\omega]) \in PT^*\mb{C}^{n+1}$, we equip $PT^*\mb{C}^{n+1}$ with the contact structure given by the differential form
	\[\alpha=\omega_1\,dz^1+\dots+\omega_{n+1}\,dz^{n+1}.\]
We consider the canonical projection $\pi\colon PT^*\mb{C}^{n+1} \to \mb{C}^{n+1}$ given by $\pi(z,[\omega])=z$, whose fibres are Legendrian submanifolds of $PT^*\mb{C}^{n+1}$ under this contact structure (i.e. $\ker d\pi_{(z,[\omega])} \subseteq \ker\alpha_{(z,[\omega])}$ for all $(z,[\omega]) \in PT^*\mb{C}^{n+1}$).
A holomorphic map $F\colon N \subset \mb{C}^n \to PT^*\mb{C}^{n+1}$ is \textbf{integral} if $F^*\alpha=0$.

\begin{definition}
	Let $N \subset \mb{C}^n$ be an open subset.
	A holomorphic map $f\colon N \to \mb{C}^{n+1}$ is \textbf{frontal} if there exists an integral map $F\colon N \to PT^*\mb{C}^{n+1}$ such that
		\[f=\pi \circ F.\]
	If $F$ is an immersion, we say $f$ is a \textbf{Legendrian map} or \textbf{wave front}.
	Similarly, a hypersurface $X \subset \mb{C}^{n+1}$ is \textbf{frontal} (resp. a \textbf{wave front}) if there exists a frontal map (resp. wave front) $f\colon N \to \mb{C}^{n+1}$ such that $X=f(N)$.
\end{definition}

If $F\colon N \to PT^*\mb{C}^{n+1}$ is an integral map and $f=\pi\circ F$,
	\[0=F^*\alpha=\sum^{n+1}_{i=1}\nu_i d(Z_i\circ F)=\sum^{n+1}_{i=1}\sum^n_{j=1}\nu_i\frac{\p f_i}{\p x_j}\,dx^j\]
for some $\nu_1,\dots,\nu_{n+1}\colon (\mb{C}^n,0) \to \mb{C}$, not all of them zero.
This is the same as claiming that there exists a nowhere vanishing smooth section $\nu\colon N \to T^*\mb{C}^{n+1}$ with $\pi\circ \nu=f$ such that $\nu(df\circ \xi)=0$ for all vector fields $\xi$ on $N$.

Since $PT^*\mb{C}^{n+1}$ is a fibre bundle, we can find for each $(z,[\omega]) \in PT^*\mb{C}^{n+1}$ an open neighbourhood $Z \subset \mb{C}^{n+1}$ of $z$ and an open $U \subseteq \mb{C}P^n$ such that $\pi^{-1}(Z)\cong Z\times U$.
Therefore, $F$ is contact equivalent to the integral mapping $\tilde f(x)=(f(x),[\nu_x])$, known as the \textbf{Nash lift} of $f$.
In particular, if $\Sigma(f)$ is nowhere dense in $N$, $\nu$ can be uniquely extended from $N\backslash \Sigma(f)$ to $N$, and there is a one-to-one correspondence between $f$ and $\tilde f$.
Such a frontal map is known as a \textbf{proper frontal} map \cite{Ishikawa_Survey}.

If $f\colon N \to \mb{C}^{n+1}$ is a proper frontal map with Nash lift $\tilde f$, we denote the corank of $\tilde f$ as $\ic(f)$, and call it the \textbf{integral corank} of $f$.

\begin{definition}
	Let $S \subset \mb{C}^n$ be a finite set.
	A holomorphic multigerm $f\colon (\mb{C}^n,S) \to (\mb{C}^{n+1},0)$ is \textbf{frontal} if it has a frontal representative $f\colon N \to Z$.
	Given a hypersurface $X \subset \mb{C}^{n+1}$, $(X,0)$ is a \textbf{frontal} hypersurface germ if there exists a finite frontal map germ $f\colon (\mb{C}^n,S) \to (\mb{C}^{n+1},0)$ such that $(X,0)=f(\mb{C}^n,S)$.
\end{definition}

Similarly, we shall say that $f$ is a proper frontal multigerm if it has a representative which is a proper frontal map.

\begin{definition}
	A frontal map germ $f\colon (\mb{C}^n,S) \to (\mb{C}^{n+1},0)$ is a \textbf{germ of wave fronts} if $\tilde f$ is an immersion.
	We also call the image of a germ of wave fronts a \textbf{wave front}.
\end{definition}

Other authors such as \cite{Arnold_I} refer to $f$ as a \emph{Legendrian map germ}, and its image as a \emph{front}.

Let $\ms{O}_n$ be the ring of germs of functions on $(\mb{C}^n,0)$.
Given a smooth map germ $f\colon (\mb{C}^n,S) \to (\mb{C}^{n+1},0)$, we define the \textbf{ramification ideal} of $f$ as the ideal $\mc{R}(f) \subseteq \ms{O}_n$ generated by the $n\times n$ minors of the Jacobian matrix of $f$,
\[\begin{pmatrix}
\dfrac{\p f_1}{\p x_1}	&	\dots		& \dfrac{\p f_{n+1}}{\p x_1}	\\
\vdots			&	\ddots	& \vdots 					\\
\dfrac{\p f_1}{\p x_n}	&	\dots		& \dfrac{\p f_{n+1}}{\p x_n}
\end{pmatrix}\]

\begin{proposition}[\cite{Ishikawa_Survey}]\label{jacobian criterion frontal}
	Let $f\colon (\mb{C}^n,S) \to (\mb{C}^{n+1},0)$ be a smooth map germ with ramification ideal $\mc{R}(f)$.
	Then $f$ is a frontal map germ if and only if $\mc{R}(f)$ is a principal ideal.
\end{proposition}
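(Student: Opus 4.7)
The approach is to build both conditions out of a single canonical conormal section. Let $\lambda_i \in \ms{O}_n$ denote the signed $n\times n$ minor of the Jacobian obtained by deleting the $i$-th column, so that $\mc{R}(f) = (\lambda_1,\dots,\lambda_{n+1})$ up to signs. Laplace expansion of a determinant with a repeated column gives
\[\sum_{i=1}^{n+1}(-1)^{i+1}\lambda_i\,\frac{\p f_i}{\p x_j}=0\quad\text{for every } j,\]
so the tuple $\lambda=((-1)^{i+1}\lambda_i)_i$ already defines a conormal section along $f$, vanishing exactly on the critical set $\Sigma(f)$. From the discussion recalled before the proposition, $f$ is frontal precisely when there exists a \emph{nowhere vanishing} conormal section $\nu=(\nu_1,\dots,\nu_{n+1}) \in \ms{O}_n^{\,n+1}$, so the proof reduces to comparing $\lambda$ with such a $\nu$.

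For the direction ``$\mc{R}(f)$ principal $\Rightarrow$ $f$ frontal'', assume $\mc{R}(f)=(h)$ and factor $\lambda_i=h\,\nu_i$ with $\nu_i\in\ms{O}_n$. Since $h$ itself lies in $(\lambda_1,\dots,\lambda_{n+1})$, we may write $h=h\sum_i a_i\nu_i$ and cancel $h$ in the integral domain $\ms{O}_n$ to get $\sum_i a_i\nu_i=1$. Hence the $\nu_i$ generate the unit ideal, so $\nu$ has no common zero. Dividing the conormal relations satisfied by $\lambda$ by $h$ produces the analogous relations for $\nu$, which is therefore the required nowhere vanishing conormal.

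Conversely, assume $\nu$ is a nowhere vanishing conormal section. On the dense open set $N\setminus\Sigma(f)$ the conormal line at each point is one-dimensional, so $\lambda=h\,\nu$ for some $h$ holomorphic on $N\setminus\Sigma(f)$. The main technical step, and the expected main obstacle, is to extend $h$ holomorphically across $\Sigma(f)$. From parallelism on $N\setminus\Sigma(f)$ one obtains the identities $\nu_j\lambda_i=\nu_i\lambda_j$, which propagate to all of $N$ by analytic continuation (both sides are holomorphic and they agree on a dense open set). Given any $p\in N$, pick an index $i$ with $\nu_i(p)\neq 0$: then $h:=\lambda_i/\nu_i$ is holomorphic near $p$, and the identities above guarantee that these local definitions agree on overlaps, producing a global $h\in\ms{O}_n$ with $\lambda_i=h\nu_i$ for every $i$. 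Since $(\nu_1,\dots,\nu_{n+1})=\ms{O}_n$, we conclude $\mc{R}(f)=(h)$. For a multigerm at a finite set $S$, one simply applies the same argument to each factor of $\ms{O}_{n,S}=\prod_{s\in S}\ms{O}_{n,s}$.
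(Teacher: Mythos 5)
Your argument is correct, but be aware that the paper itself offers no proof of this statement: it is imported from Ishikawa's survey \cite{Ishikawa_Survey}, and the only trace of the argument in the text is the Remark that follows, which records that once $\mc{R}(f)=\langle m_j\rangle$ for one of the minors $m_j$, the Nash lift may be taken to be $\tilde f(x)=(f(x),m_1/m_j,\dots,m_{n+1}/m_j)$. Your proof is essentially that standard argument. The only cosmetic difference is that you work with an arbitrary generator $h$; since $\ms{O}_{n,s}$ is local, your identity $\sum_i a_i\nu_i=1$ already forces some $\nu_i$ to be a unit, i.e.\ a principal $\mc{R}(f)$ is generated by one of the minors themselves, which makes the nonvanishing of the conormal section immediate ($\nu_j=1$) and is exactly what the Remark uses. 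One caveat worth making explicit: both of your cancellations of $h$ in the forward direction and your appeal to the identity theorem in the converse require $\Sigma(f)$ to be nowhere dense on each branch, equivalently $\mc{R}(f)\neq 0$ there; the branches on which the differential drops rank identically must be dealt with separately (there $\mc{R}(f)=0$ is trivially principal), and the statement as printed in the paper silently assumes this situation away as well. Finally, for a germ the gluing of the local quotients $\lambda_i/\nu_i$ is unnecessary: at each point of $S$ one index $i$ has $\nu_i$ a unit in the stalk, and $\lambda_j\nu_i=\lambda_i\nu_j$ gives $\mc{R}(f)=\langle \lambda_i/\nu_i\rangle$ directly.
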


\begin{remark}
	Let $f$ be a proper frontal map germ, $m_1,\dots,m_{n+1} \in \ms{O}_n$ the minors of the Jacobian matrix of $f$ and assume that $\mc{R}(f)=\langle m_j\rangle$.
	It follows from the proof of Proposition \ref{jacobian criterion frontal} that the Nash lift of $f$ is Legendrian equivalent to the map germ $\tilde f\colon (\mb{C}^n,S) \to \mb{C}^{2n+1}$ given by
		\[\tilde f(x)=\left(f(x),\frac{m_1(x)}{m_j(x)},\dots,\widehat{\frac{m_j(x)}{m_j(x)}},\dots,\frac{m_{n+1}(x)}{m_j(x)}\right),\]
	where $\widehat{\cdot}$ denotes an omitted component.
	This means that $\tilde f$ can be taken as the Nash lift of $f$.
\end{remark}

\begin{example}
	\begin{enumerate}
		\item Let $\gamma\colon (\mb{C},0) \to (\mb{C}^2,0)$ be the analytic plane curve given by $\gamma(x)=(p(x),q(x))$, and assume $\ord q \geq \ord p$.
		We have $\mc{R}(\gamma)=\langle p' \rangle$, so $\gamma$ is a frontal map germ.

		\item The folded Whitney umbrella can be parametrized as
		\begin{funcion*}
			f\colon (\mb{C}^2,0) \arrow[r] & (\mb{C}^3,0)\\
			(x,y) \arrow[r, maps to] & (x,y^2,xy^3)
		\end{funcion*}
		We have $\mc{R}(f)=\langle y\rangle$, so $f$ is a frontal map germ.
		
		\item The $F_4$ singularity from Mond's classification \cite{Mond_Classification} can be parametrised as
		\begin{funcion*}
			f\colon (\mb{C}^2,0) \arrow[r] & (\mb{C}^3,0)\\
			(x,y) \arrow[r, maps to] & (x,y^2,y^5+x^3y)
		\end{funcion*}
		We have $\mc{R}(f)=\langle x^3,y\rangle$, which is not a principal ideal, so $f$ is not a frontal.
	\end{enumerate}
\end{example}

We now state a series of definitions and results that we shall use throughout this paper.
Proofs for these statements can be found in \cite{MNO_Frontals}.

We say that two smooth map germs $f,g\colon (\mb{C}^n,S) \to (\mb{C}^p,0)$ (not necessarily frontal) are $\ms{A}$-equivalent if there are diffeomorphisms $\phi\colon (\mb{C}^n,S) \to (\mb{C}^n,S)$ and $\psi\colon (\mb{C}^p,0) \to (\mb{C}^p,0)$ such that $g=\psi\circ f\circ \phi^{-1}$.
We also define the space
	\[T\ms{A}_ef=\left\{\left.\frac{df_t}{dt}\right|_{t=0}: f_t=\psi_t\circ f\circ \phi_t^{-1}\right\},\]
where $\phi_t\colon (\mb{C}^n,S) \to (\mb{C}^n,S)$ and $\psi_t\colon (\mb{C}^p,0) \to (\mb{C}^p,0)$ are families of diffeomorphisms and $t \in \mb{D}$.
This space can be computed algebraically as the sum $tf(\theta_n)+\omega f(\theta_p)$, where
\begin{funcion} \label{t y omega}
	tf\colon \theta_n \arrow[r] 	& \theta(f) 		&& \omega f\colon \theta_p \arrow[r] 	& \theta(f) \\
	\xi \arrow[r, maps to] 		& df\circ \xi 	&& 	\eta \arrow[r, maps to]		& \eta \circ f
\end{funcion}

\begin{proposition}\label{frontals preserved under A}
	Let $f,g\colon (\mb{C}^n,S) \to (\mb{C}^{n+1},0)$ be $\ms{A}$-equivalent holomorphic map germs.
	If $f$ is frontal, $g$ is frontal.
	Moreover, if $f$ is a germ of wave front, $g$ is a germ of wave front.
\end{proposition}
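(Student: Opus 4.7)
The plan is to prove both statements simultaneously by transporting an integral lift of $f$ through the target diffeomorphism. Write $g = \psi \circ f \circ \phi^{-1}$, with $\phi$ a diffeomorphism of $(\mb{C}^n,S)$ and $\psi$ a diffeomorphism of $(\mb{C}^{n+1},0)$, and let $F\colon N \to PT^*\mb{C}^{n+1}$ be an integral lift of a frontal representative of $f$, so that $\pi \circ F = f$ and $F^*\alpha = 0$. The first step is to lift $\psi$ to a map $\hat\psi\colon PT^*\mb{C}^{n+1} \to PT^*\mb{C}^{n+1}$ by
\[\hat\psi(z,[\omega]) = \bigl(\psi(z),\,[\omega \circ (D\psi_z)^{-1}]\bigr),\]
which is a well-defined diffeomorphism because $D\psi_z$ is invertible and the formula descends to the projectivization. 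A direct computation in a local trivialization of $PT^*\mb{C}^{n+1}$ using the form $\alpha = \omega_1\,dz^1 + \dots + \omega_{n+1}\,dz^{n+1}$ shows that $\hat\psi^*\alpha = h\,\alpha$ for some nowhere vanishing holomorphic function $h$, so $\hat\psi$ is a contactomorphism and in particular sends integral maps to integral maps.

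The second step is to set $G = \hat\psi \circ F \circ \phi^{-1}$. Then $\pi \circ G = \psi \circ \pi \circ F \circ \phi^{-1} = g$, and
\[G^*\alpha = (\phi^{-1})^*\,F^*(\hat\psi^*\alpha) = (\phi^{-1})^*\bigl((h\circ F)\,F^*\alpha\bigr) = 0,\]
so $G$ is an integral lift of $g$ and $g$ is frontal. For the wave front part, if $F$ is an immersion then $G$ is obtained from $F$ by pre- and post-composition with the diffeomorphisms $\phi^{-1}$ and $\hat\psi$, hence is itself an immersion, and $g$ is a germ of wave front. The multigerm case requires no extra work, since $\psi$ acts globally on the target and the argument above can be carried out branch by branch.

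The only nontrivial point is the verification that $\hat\psi$ is a contactomorphism; beyond that, the argument is formal. This verification amounts to a direct computation with $\alpha$ in the inhomogeneous coordinates $\omega_i/\omega_j$ on the fibres of $\pi$, and reduces to the general principle that any diffeomorphism of the base lifts canonically to a contactomorphism of its projectivized cotangent bundle. An alternative approach would bypass this contact-geometric step and use instead the Jacobian criterion of Proposition \ref{jacobian criterion frontal}: the chain rule expresses the vector of $n\times n$ minors of the Jacobian of $g$ as the image of the vector of minors of $f \circ \phi^{-1}$ under the invertible $(n+1)\times(n+1)$ matrix $\Lambda^n D\psi$, times the unit $\det D\phi^{-1}$, which shows that $\mc{R}(g)$ is principal if and only if $\mc{R}(f)$ is. However, handling the wave front half through this route would still require invoking the explicit Nash lift of Remark 2.4, making the contact-geometric proof sketched above noticeably cleaner.
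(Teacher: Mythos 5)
Your argument is correct. Note that the paper does not actually prove this proposition -- it is stated in the block of results whose proofs are deferred to \cite{MNO_Frontals} -- so there is no in-text proof to compare against; your route (lifting the target diffeomorphism $\psi$ to the canonical contactomorphism $\hat\psi$ of $PT^*\mb{C}^{n+1}$ and transporting the integral lift by $G=\hat\psi\circ F\circ\phi^{-1}$) is the standard one and handles both the frontal and the wave front claims at once, since immersions are preserved under composition with diffeomorphisms. The one small point worth making explicit if you wanted to match the paper's Definition 2.4 literally is that for a \emph{proper} frontal the integral immersion $G$ you construct coincides with the Nash lift of $g$ (they agree on the dense set where $g$ is immersive, hence everywhere), so $\tilde g$ itself is an immersion; your alternative sketch via the Jacobian criterion is also viable but, as you say, less clean for the wave front half.
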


Given a frontal map germ $f\colon (\mb{C}^n,S) \to (\mb{C}^{n+1},0)$, we define the space or infinitesimal frontal deformations of $f$ as
	\[\ms{F}(f):=\left\{\left.\frac{df_t}{dt}\right|_{t=0}: f_0=f, (f_t,t) \text{ frontal}\right\}\]
Using Proposition \ref{frontals preserved under A}, we see that $T\ms{A}_ef \subseteq \ms{F}(f)$.
In particular, this implies that $tf(\xi)$ and $\omega f(\eta)$ are in $\ms{F}(f)$ for all $\xi \in \theta_n$ and $\eta \in \theta_{n+1}$.

\begin{definition}
	We define the \textbf{frontal codimension} or $\ms{F}$-codimension of $f$ as
		\[\codim_{\ms{F}e}(f):=\dim_\mb{C}\frac{\ms{F}(f)}{T\ms{A}_ef}\]
	We say $f$ is $\ms{F}$-finite if $\codim_{\ms{F}_e}(f) < \infty$.
\end{definition}

Given a proper frontal map-germ $f\colon (\mb{C}^n,S) \to (\mb{C}^{n+1},0)$, we define the corank (resp. integral corank) of $f$ as the maximum corank (resp. integral corank) among its branches.

\begin{definition}\label{frontal unfolding}
	A \textbf{frontal unfolding} of a frontal map germ $f\colon (\mb{C}^n,S) \to (\mb{C}^{n+1},0)$ is an unfolding $F\colon (\mb{C}^d\times \mb{C}^n,\{0\}\times S) \to (\mb{C}^d\times \mb{C}^{n+1},0)$ which is frontal as a map germ.
	A frontal map germ $f\colon (\mb{C}^n,S) \to (\mb{C}^{n+1},0)$ is stable as a frontal or $\ms{F}$-stable if every $d$-parameter frontal unfolding $F$ of $f$ is $\ms{A}$-equivalent to $f\times\id_{(\mb{C}^d,0)}$.
\end{definition}

\begin{remark}
	Let $f\colon (\mb{C}^n,S) \to (\mb{C}^{n+1},0)$ be a proper wave front germ (i.e. a proper frontal germ with integral corank $0$) with Nash lift $\tilde f\colon (\mb{C}^n,S) \to PT^*\mb{C}^{n+1}$, and $F\colon (\mb{C}^d\times \mb{C}^n,\{0\}\times S) \to (\mb{C}^d\times \mb{C}^{n+1},0)$ be a frontal unfolding in the form
		\[F(u,x)=(u,f_u(x)).\]
	The unfolding $F$ induces an integral $d$-parameter deformation $\tilde f_u$ of $\tilde f$.
	Since $f$ is a wave front, $\tilde f$ is an immersion, so $\tilde f_u$ (and thus the Nash lift $\tilde F$ of $F$) is an immersion.
	It follows that $F$ is a wave front.
\end{remark}

\section{Counting corank $2$ singularities}
Let $f\colon (\mb{C}^n,S) \to (\mb{C}^{n+1},0)$ be a generic germ of frontal map germs, and $f_t$ be a $1$-parameter deformation defining a stable frontal unfolding of $f$.
For $0 < |t| \ll 1$, the image of $f_t$ exhibits at most stable frontal singularities in a neighbourhood of the origin, the number of which constitutes an smooth invariant of $f$ and can thus be used to explore analytic invariants of $f$, such as the frontal Milnor number of $f$ (see \cite{MNO_Surfaces, MNO_Mond} for more information on this topic).

\begin{example}\label{swallowtail}
	In this example, we show how one can compute an analytic invariant of a wave front singularity $f\colon (\mb{C}^2,0) \to (\mb{C}^3,0)$ (the frontal Milnor number of $f$, as defined on \cite{MNO_Mond}) using the number of isolated singularities in a generic frontal perturbation of $f$.
	Consider the wave front $f\colon (\mb{C}^2,0) \to (\mb{C}^3,0)$ given by
	\begin{equation} \label{n swallowtails}
		f(x,y)=(x,2y^3+x^ny,3y^4+x^ny^2),
	\end{equation}
	and the $1$-parameter family of wave fronts
		\[f_t(x,y)=(x,2y^3+x^ny+ty,3y^4+x^ny^2+ty^2).\]
	The mapping $F(t,x,y)=(t,f_t(x,y))$ is a stable frontal unfolding of $f$, and the image of $f_t$ contains $n$ swallowtail singularities (see Figure \ref{swallowtails} for the case $n=2$).
	
	\begin{figure}[ht]
	\includegraphics[width=.4\textwidth]{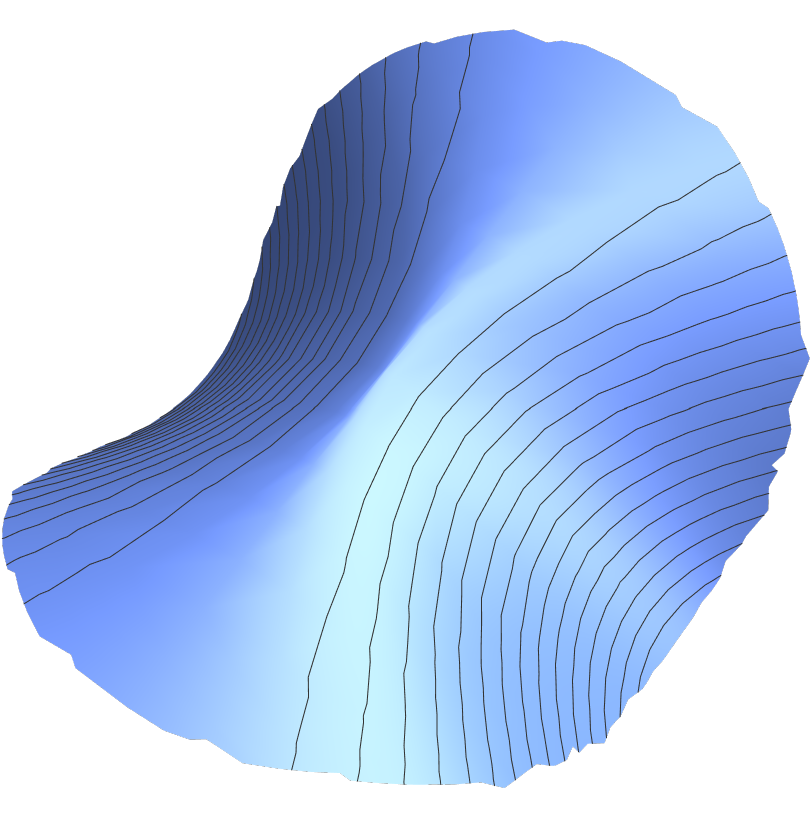}
	\includegraphics[width=.4\textwidth]{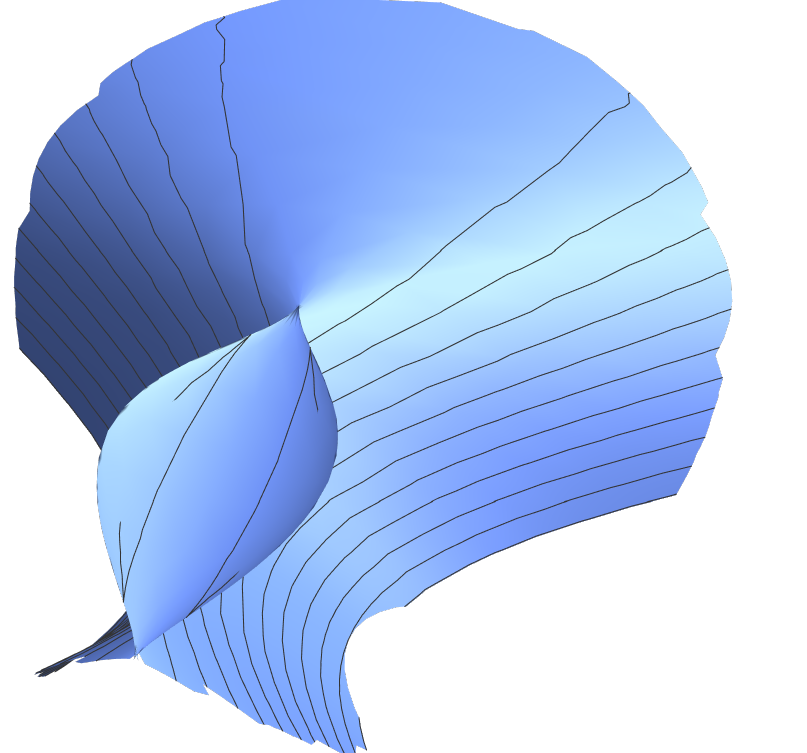}
	\caption{Real image of the mapping \eqref{n swallowtails} for $n=2$ (left) along with the real image of $f_t$ for $t=1$ (right), where a sphere has emerged.
	This wave front can be obtained as the discriminant of the family $4^2_1$ from Marar and Tari's classification \cite{MararTari}.
 \label{swallowtails}}
\end{figure}
	
	Using the results from \cite{MNO_Surfaces}, the frontal Milnor number of $f$ is given by\footnote{The expression for the number of spheres presented in \cite{MNO_Surfaces} had a mistake in the final result; the one given here is the correct expression.}
		\[\mu_\ms{F}(f)=\frac{1}{2}\Big(\mu(D_+(f),0)+S+W-1\Big)-K-2T,\]
	where $D_+(f)$ denotes the source curve of transverse double points, and $S$, $W$, $K$ and $T$ denote the number of swallowtails, folded Whitney umbrellas, cuspidal double points and transverse triple points in a generic frontal perturbation of $f$.
	The image equation for $D_+(f)$ is given in this case by $2y^2+x^n=0$, hence $\mu(D_+(f),0)=n-1$.
	Moreover, we have that $S=n$ and $W=K=T=0$, as $f$ is an augmentation of a swallowtail singularity.
	Therefore, the frontal Milnor number of $f$ is given by
		\[\frac{1}{2}\Big(\mu(D_+(f),0)+S-1\Big)=\frac{1}{2}(n-1+n-1)=n-1.\]
	This means that for $0 < |t| \ll 1$, the image of $f_t$ is homotopically equivalent to a bouquet of $n-1$ spheres.
\end{example}

If $f\colon (\mb{C}^n,S) \to (\mb{C}^{n+1},0)$ is a corank $1$ wave front, the image of $f$ can be seen as the discriminant of a smooth $h\colon (\mb{C}^{n+1},S) \to (\mb{C}^{n+1},0)$, and we can apply the formulas given by Marar, Montaldi and Ruas in \cite{MMR_Schemes} to count their stable isolated singularities.
A method to compute one such $h$ is given in \cite{MNO_Mond}.
On the other hand, if $f\colon (\mb{C}^2,S) \to (\mb{C}^3,0)$ is a frontal map germ with corank $1$, we give in \cite{MNO_Surfaces} expressions to count their stable isolated singularities.
In the following sections, we explore formulas to count stable singularities on corank $2$ wave fronts.

The classification of stable wave front singularities $f\colon (\mb{C}^n,S) \to (\mb{C}^{n+1},0)$ \cite{Arnold_I} establishes that corank $2$ stable singularities only emerge for $n \geq 3$, the first of which is the $D_4$ singularity for $n=3$.
The $D_4$ singularity can be parametrised as the map germ $f\colon (\mb{C}^3,0) \to (\mb{C}^4,0)$ given by
	\[f(u,v,w)=(u, v w, 2 u v + 3 v^2 + w^2, u v^2 + 2 v^3 + 2 v w^2).\]
The other stable families for $n=3$ are again the $A_k$ singularities with $k \leq n+1$, all of which have corank $1$.
Therefore, counting corank $2$ singularities in the stable deformation of a wave front equates to counting $D_4$ singularities.

Given a corank $2$ wave front $f\colon (\mb{C}^3,0) \to (\mb{C}^4,0)$, we denote its Jacobian matrix by $J_f$.
The points where $D_4$ singularities emerge in a $d$-parameter unfolding $F$ of $f$ are given by the points where $F$ has corank $2$ for a fixed value of $t$; this is, the points where the $(d+2)\times (d+2)$ minors of $J_F$ vanish.
We shall denote the ideal generated by the $k\times k$ minors of a matrix $M$ as $I_k(M)$.

Let $d \geq 0$. We can choose coordinates in the source and target such that a corank $2$ map germ $F\colon (\mb{C}^d\times\mb{C}^2,0) \to (\mb{C}^d\times \mb{C}^3,0)$ takes the form
	\begin{equation}
		F(u,v,w)=(u,p(u,v,w),q(u,v,w),r(u,v,w)), \label{corank 2 explicit}
	\end{equation}
with $u \in \mb{C}^d$, $v,w \in \mb{C}$ and $p,q,r \in \mf{m}_{3+d}^2$.
In this case, the Jacobian matrix of $f$ adopts the form
	\[
	J_F=
	\begin{pmatrix}
		1		& 0		& \dots 	& 0		& 0		& 0		\\
		\vdots	& \vdots 	& \ddots	& \vdots	& \vdots	& \vdots	\\
		0		& 0		& \dots	& 1		& 0		& 0		\\
		p_{u_1}	& p_{u_2}	& \dots	& p_{u_d}	& p_v	& p_w	\\
		q_{u_1}	& q_{u_2}	& \dots	& q_{u_d}	& q_v	& q_w	\\
		r_{u_1}	& r_{u_2}	& \dots	& r_{u_d}	& r_v		& r_w
	\end{pmatrix}
	\]
Computing the minors by blocks, it is then easy to see that
	\[I_{d+1}(J_F)=\langle p_v, p_w, q_v, q_w, r_v, r_w\rangle.\]
Nevertheless, the terms $r_v$ and $r_w$ can be omitted by using that $F$ is a wave front:

\begin{lemma}\label{lemma alpha beta}
	There are $\alpha, \beta \in \ms{O}_n$ such that
	\begin{align}
		r_v=\alpha p_v+\beta q_v; && r_w=\alpha p_w+\beta q_w \label{expressions rv rw}
	\end{align}
	and the matrix
		\[M(u,v,w)=
		\begin{pmatrix}
			\alpha_v(u,v,w) 	& \alpha_w(u,v,w)	\\
			\beta_v(u,v,w)	& \beta_w(u,v,w)
		\end{pmatrix}\]
	is invertible for all $(u,v,w)$ in an open neighbourhood $U \subseteq \mb{C}^n$ of $0$.
\end{lemma}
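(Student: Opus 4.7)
The plan rests on two ingredients: frontality forces the ramification ideal $\mathcal{R}(F) = I_{d+2}(J_F)$ to be principal (Proposition~\ref{jacobian criterion frontal}), and the wave-front condition translates into immersivity of the Nash lift $\tilde F$ at the origin. First I would expand the $(d+2)\times(d+2)$ minors of $J_F$ along its identity block of size $d$, obtaining $\mathcal{R}(F) = \langle a, b, c\rangle$ with $a = p_v q_w - p_w q_v$, $b = p_v r_w - p_w r_v$, $c = q_v r_w - q_w r_v$. Writing $a = \tilde a\lambda$, $b = \tilde b\lambda$, $c = \tilde c\lambda$ for a generator $\lambda$, the containment $\lambda \in (a,b,c)$ gives $(\tilde a, \tilde b, \tilde c) = \mathcal{O}_n$, so at least one of $\tilde a(0), \tilde b(0), \tilde c(0)$ is a unit. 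After a permutation of the last three target coordinates (which preserves the form \eqref{corank 2 explicit}), I may assume $\tilde a(0)\neq 0$ and set $\alpha := -c/a$, $\beta := b/a \in \mathcal{O}_n$.

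To verify \eqref{expressions rv rw}, I would introduce the defects $R_v := r_v - \alpha p_v - \beta q_v$ and $R_w := r_w - \alpha p_w - \beta q_w$. Direct expansion shows $p_v R_w - p_w R_v = b - \beta a = 0$ and $q_v R_w - q_w R_v = c + \alpha a = 0$; eliminating $R_w$ between these identities yields $a R_v = 0$. Since $\mathcal{O}_n$ is an integral domain and $a\neq 0$, this forces $R_v = 0$, and symmetrically $R_w = 0$. For the invertibility of $M$, I would exploit that $F$ is a wave front. In the affine chart $\nu_Z\neq 0$ of $PT^*\mathbb{C}^{d+3}$, the equations $F^*\nu = 0$ determine the projective coordinates of $\tilde F$: the $v,w$ rows give $\nu_X/\nu_Z = -\alpha$, $\nu_Y/\nu_Z = -\beta$, while the $u_i$ rows give $\nu_i/\nu_Z = \alpha p_{u_i} + \beta q_{u_i} - r_{u_i} =: \phi_i$. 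Because $p, q, r \in \mathfrak{m}^2$, the $\partial_v$ and $\partial_w$ columns of $d\tilde F|_0$ vanish on the $(u, p, q, r)$ rows, so immersivity is controlled by the $(d+2)\times 2$ block whose rows are $\partial_v\phi_i|_0, \partial_w\phi_i|_0$ ($i = 1,\dots,d$) and $-M|_0$.

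The delicate step, and the one I expect to be the main obstacle, is showing that the $\phi_i$-rows of this block vanish at the origin. Differentiating the identity $r_v = \alpha p_v + \beta q_v$ from \eqref{expressions rv rw} with respect to $u_i$ and evaluating at $0$, where $p_v$ and $q_v$ vanish, gives $r_{u_i v}(0) = \alpha(0) p_{u_i v}(0) + \beta(0) q_{u_i v}(0)$, i.e.\ $\partial_v\phi_i|_0 = 0$; symmetrically $\partial_w\phi_i|_0 = 0$. With this cancellation, the only surviving entries in the $\partial_v, \partial_w$ columns at the origin assemble into $-M|_0$, so $\tilde F$ is immersive at $0$ precisely when $\det M|_0 \neq 0$. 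Invertibility of $M$ on an open neighborhood $U$ of $0$ then follows by continuity.
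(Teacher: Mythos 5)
Your proposal is correct, but it takes a genuinely different route from the paper. The paper's proof is short and leans on external machinery: it invokes the normal form \eqref{differential equation corank 2} for a corank~$2$ frontal (writing $dr=\sum_i\lambda_i\,du_i+\alpha\,dp+\beta\,dq$ after a choice of coordinates), reads off \eqref{expressions rv rw} as the $dv$- and $dw$-coefficients, and then cites Lemma 2.17 of \cite{MNO_Mond} for the invertibility of $M$. You instead rebuild both halves from first principles. For the existence of $\alpha,\beta$ you use only Proposition \ref{jacobian criterion frontal}: principality of $\mc{R}(F)=\langle a,b,c\rangle$ with $a=p_vq_w-p_wq_v$, $b=p_vr_w-p_wr_v$, $c=q_vr_w-q_wr_v$ forces one of the cofactors $\tilde a,\tilde b,\tilde c$ to be a unit, and after the (legitimate, form-preserving) permutation making $\tilde a(0)\neq 0$ you get $\alpha=-c/a$, $\beta=b/a$; your elimination argument $aR_v=aR_w=0$ in the domain $\ms{O}_n$ correctly verifies \eqref{expressions rv rw}, with the signs checking out. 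For the invertibility of $M$ you unwind the immersivity of the Nash lift in the chart $\nu_Z\neq 0$, and you correctly identify and resolve the one delicate point: the vanishing at the origin of $\p_v\phi_i$ and $\p_w\phi_i$, obtained by differentiating \eqref{expressions rv rw} in $u_i$ and using $p,q,r\in\mf{m}^2$, which isolates $-M|_0$ as the block governing immersivity. In effect you have supplied a self-contained proof of the cited Lemma 2.17 of \cite{MNO_Mond}; the paper's approach buys brevity by outsourcing exactly this computation, while yours buys independence from that reference at the cost of the explicit minor bookkeeping. The only points worth making explicit are that $a\neq 0$ (equivalently $\mc{R}(F)\neq 0$), which holds because $F$ is a proper frontal with nowhere dense singular set, and that the coordinate permutation is permitted because the lemma, like the paper's own proof and the subsequent Theorem \ref{count D4}, is stated up to a choice of adapted coordinates.
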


\begin{proof}
	Since $F$ is a frontal map germ, we can choose coordinates in the source and target such that $F$ verifies the differential equation
	\begin{equation} \label{differential equation corank 2}
		dr=\lambda_1\,du_1+\dots+\lambda_d\,du_d+\alpha\,dp+\beta\,dq.
	\end{equation}
	for some $\lambda_1,\dots,\lambda_n,\alpha,\beta \in \ms{O}_n$.
	Taking the coefficients with respect to $dv$ and $dw$, we see that
	\begin{align}
		r_v=\alpha p_v+\beta q_v;		&&	r_w=\alpha p_w+\beta q_w. \label{expressions rv rw}
	\end{align}
	It then follows from \cite{MNO_Mond}, Lemma 2.17 that the matrix $M(u,v,w)$ is invertible in a neighbourhood $U \subseteq \mb{C}^n$ of $0$, as claimed.
\end{proof}

We will now give an expression to count the number of corank $2$ points in the image of $F$ that depends only on $f$, using a result from analytic geometry known as the \emph{conservation of multiplicity} principle.
The conservation of multiplicity principle states the following:

\begin{theorem}[\cite{MondNuno}, Corollary E.6]\label{conservation}
	Let $x \in \mb{C}^n$, $M$ be a Cohen-Macaulay $\ms{O}_{n,x}$-module and $\ms{M}$ a representative of $M$ (in the sense of \cite{MondNuno}, \S E.4).
	Then, there is an open neighbourhood $U \subseteq \mb{C}^n$ of $x$ such that
		\[\dim_\mb{C} M=\sum_{x \in \supp(\ms{M})\cap U} \dim_\mb{C} \ms{M}_x\]
\end{theorem}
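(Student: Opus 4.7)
The plan is to invoke the interplay between the Cohen-Macaulay hypothesis and flatness, reducing the claim to the conservation of fibre degrees for a finite flat projection via Noether normalization. This is essentially the strategy behind the version of conservation of multiplicity given in Appendix E of the cited reference, and I would reconstruct it as follows.

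First, I would centre coordinates at $x$ and apply local Noether normalization: if $M$ has Krull dimension $d$ over $\ms{O}_{n,x}$, then after a generic linear change of coordinates there is a projection $\pi\colon (\mb{C}^n,x) \to (\mb{C}^d,0)$ whose restriction to $\supp(\ms{M})$ is finite in a neighbourhood of $x$, and such that $M$ becomes a finite module over $\ms{O}_{d,0}$. The Cohen-Macaulay hypothesis on $M$, together with the regularity of $\ms{O}_{d,0}$, then forces $M$ to be a \emph{free} $\ms{O}_{d,0}$-module by the local criterion for flatness (equivalently, by Auslander-Buchsbaum combined with $\depth_{\ms{O}_{d,0}} M = d$).

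Second, I would spread this structure out to the representative: pick $\ms{M}$ on a neighbourhood $U$ of $x$ small enough that $\pi|_{\supp(\ms{M})\cap U}$ is finite and proper and that $\pi_*(\ms{M}|_U)$ is a locally free coherent sheaf of some rank $r$ on $\pi(U)$. The rank $r$ can now be read off in two ways. At the special fibre over $\pi(x)$ it recovers $r = \dim_{\mb{C}} M$ (after dividing out by the regular sequence of coordinates on $\mb{C}^d$ when $d>0$); at an arbitrary $s \in \pi(U)$ it equals
\[
r = \sum_{y \in \pi^{-1}(s) \cap \supp(\ms{M}) \cap U} \dim_{\mb{C}} \ms{M}_y.
\]
Shrinking $U$ further so that every point of $\supp(\ms{M}) \cap U$ lies in $\pi^{-1}(\pi(x))$ then turns the right-hand side into the sum appearing in the statement.

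The main obstacle is the simultaneous management of several shrinking neighbourhoods together with the choice of a projection $\pi$ sufficiently generic for the given $\ms{M}$: one must arrange that the finiteness of $\pi$ on $\supp(\ms{M})$, the local freeness of $\pi_*\ms{M}$, the properness of the restricted projection and the concentration of the support in a single fibre all hold on a common $U$. These are standard but delicate bookkeeping steps in the analytic category, and they constitute the technical heart of the result in the reference.
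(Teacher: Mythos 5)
First, a point of comparison: the paper does not prove this statement at all --- it is imported verbatim from \cite{MondNuno}, Corollary E.6 --- so the only thing to measure your proposal against is the argument in that reference. Your core mechanism (a projection $\pi\colon(\mb{C}^n,x)\to(\mb{C}^d,0)$, $d=\dim M$, finite on $\supp(\ms{M})$; Cohen--Macaulayness giving $\depth_{\ms{O}_{d,0}}M=d$ and hence, by Auslander--Buchsbaum over the regular base, freeness of $M$ as an $\ms{O}_{d,0}$-module; then conservation of the rank of the finite locally free pushforward $\pi_*\ms{M}$) is exactly the mechanism used there, so the substance of the proposal is sound.

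The genuine gap is your final step. If $d>0$, the support of $\ms{M}$ maps finitely \emph{onto} a neighbourhood of $\pi(x)$ in $\mb{C}^d$, so no shrinking of $U$ can force $\supp(\ms{M})\cap U$ into the single fibre $\pi^{-1}(\pi(x))$; and if $d=0$, then $\supp(\ms{M})\cap U=\{x\}$ for small $U$ and the displayed equality is a tautology needing none of the machinery. The content of the theorem --- and the form in which the paper actually uses it, applied to the Cohen--Macaulay module attached to a stable frontal unfolding $F$ and evaluated at a nearby parameter value --- is the fibrewise statement: for every $s$ near $\pi(x)$,
\[\dim_\mb{C}\frac{M}{\mf{m}_{d}M}=\sum_{y\in\pi^{-1}(s)\cap\supp(\ms{M})\cap U}\dim_\mb{C}\frac{\ms{M}_y}{\mf{m}_{s}\ms{M}_y},\]
and your computation of the rank $r$ at the special fibre and at a general fibre already proves precisely this. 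Relatedly, in your formula for $r$ at a general $s$ the summands must be $\dim_\mb{C}\ms{M}_y/\mf{m}_{s}\ms{M}_y$, not $\dim_\mb{C}\ms{M}_y$, since the latter is infinite whenever $\ms{M}_y$ has positive depth over $\ms{O}_{d,s}$. Delete the final ``concentrate the support in one fibre'' step, state the conclusion fibrewise, and the argument closes and agrees with the cited reference.
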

 
If the module $\ms{O}_3/I_2(J_f)$ is Cohen-Macaulay, then we can apply Theorem \ref{conservation} to count the number of corank $2$ points in a stable frontal unfolding $F$ using only $\ms{O}_3/I_2(J_f)$.
We believe this is the case for any $\ms{F}$-finite frontal map germ $f$ (wave front or otherwise); nonetheless, we have not been able to prove it, so we will instead define a complete intersection module which counts the number of corank $2$ points of $F$ when $f$ is a wave front.

\begin{example}
	Let $f\colon (\mb{C}^3,0) \to (\mb{C}^4,0)$ be the smooth map germ given by
		\[f(u,v,w)=(u, v w, 2 u^n v + 3 v^2 + w^2, u^n v^2 + 2 v^3 + 2 v w^2).\]
	We have $\ms{R}(f)=\langle u^nv+3v^2-w^2\rangle$ and $\ic(f)=0$, hence $f$ is a wave front.
	We also have that $I_2(J_f)=(u^n,v,w)$, so $\ms{O}_3/I_2(J_f)$ is a complete intersection module with dimension $n$.
	Therefore, a stable frontal unfolding of $F$ has $n$ corank $2$ points.
	In particular, the germ of $F$ at those points describes an $D_4$ singularities.
\end{example}

Let $F\colon (\mb{C}^n,0) \to (\mb{C}^{n+1},0)$ be a germ of wave front with corank $2$: the image of $F$ is the discriminant of a smooth, corank $1$ map germ $H\colon (\mb{C}^{n+2}, 0) \to (\mb{C}^{n+1},0)$ with smooth critical set \cite{Arnold_I, MNO_Mond}.
We may choose coordinates in the source and target such that
	\begin{align}	\label{H parametrised}
		H(u,s,t,v,w)=(u,s,t,h(u,s,t,v,w)); && u \in \mb{C}^{n-2},\; s,t,v,w \in \mb{C}.
	\end{align}
In particular, we note that $H$ is an unfolding of a certain function $(v,w) \mapsto h(0,0,0,v,w)$.
This means that $F$ has corank $2$ at $(u,v,w) \in \mb{C}^n$ if and only if there are $s,t \in \mb{C}$ such that $x=(s,t,u,v,w)$ is a critical point of $h$ and the Hessian matrix of $h$ has corank $2$ at $x$:
\begin{equation}\label{hessian h}
	h_v(x)=h_w(x)=h_{vv}(x)=h_{vw}(x)=h_{ww}(x)=0.
\end{equation}
Note that if $F$ is a stable unfolding of a $f\colon (\mb{C}^3,0) \to (\mb{C}^4,0)$, $h$ is in fact the $D_4$ singularity from Arnold's classification of simple functions \cite{Arnold_72}.
Alternatively, we note that if $H$ has corank $2$ at a point $x$, then $x$ is a $\Sigma^{2,2}$ point of $H$ in Thom-Boardman notation (see \cite{MondNuno}, \S 4.7 for details).
Fukui, Nuño-Ballesteros and Saia \cite{FNBS_98} showed that the locus of $\Sigma^{2,2}$ points of $H$ is given by the zeros of a certain ideal they call $J_{2,2}(H)$.
In this case,
	\[J_{2,2}(H)=\langle h_v,h_w, h_{vv}, h_{vw}, h_{ww}\rangle,\]
whose zero locus is the set of solutions of Equation \eqref{hessian h}.

Using a result from \cite{MNO_Mond}, we can write $h$ explicitly as
	\[h(u,s,t,v,w)=r(u,v,w)+\alpha(u,v,w)(s-p(u,v,w))+\beta(u,v,w)(t-q(u,v,w)),\]
where $s,t \in \mb{C}$.
Then, Equation \eqref{hessian h} becomes
\begin{align}
	0=h_v	&=\alpha_v(s-p)+\beta_v(t-q);	\label{d4:hv}	\\
	0=h_w	&=\alpha_w(s-p)+\beta_w(t-q);	\label{d4:hw}	\\
	0=h_{vv}	&=\alpha_{vv}(s-p)-\alpha_vp_v+\beta_{vv}(t-q)-\beta_vq_v;	\label{d4:hvv}\\
	0=h_{vw}	&=\alpha_{vw}(s-p)-\alpha_vp_w+\beta_{vw}(t-q)-\beta_vq_w;\\
	0=h_{ww}	&=\alpha_{ww}(s-p)-\alpha_wp_w+\beta_{ww}(t-q)-\beta_wq_w. \label{d4:hww}
\end{align}
Equations (\ref{d4:hv}, \ref{d4:hw}) form a linear system of equations, with the coefficient matrix being the matrix $M$ from Lemma \ref{lemma alpha beta}, meaning that $s-p=t-q=0$.
Applying this information to (\ref{d4:hvv}\textendash\ref{d4:hww}), we arrive at
\begin{align*}
	0=\beta_vq_v+\alpha_vp_v;	&&
	0=\beta_vq_w+\alpha_vp_w;	&&
	0=\beta_wq_w+\alpha_wp_w,
\end{align*}
which yields the following

\begin{theorem}\label{count D4}
	Let $F\colon (\mb{C}^n,0) \to (\mb{C}^{n+1},0)$ be a germ of wave front with corank $2$.
	Take coordinates in the source and target such that $F$ is given as in \eqref{corank 2 explicit} and verifies the identity \eqref{differential equation corank 2}.
	Then, the set of points $(u,v,w) \in \mb{C}^n$ where $F$ has corank $2$ near $S$ is given the common zeros of the functions
		\[\beta_vp_v+\alpha_vq_v, \beta_vp_w+\alpha_vq_w, \beta_wp_w+\alpha_wq_w.\]
	Therefore, $I_{d+1}(J_F)$ has three generators and $\ms{O}_3/I_2(J_F)$ is a complete intersection module for $n=3$.
\end{theorem}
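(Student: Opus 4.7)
The strategy is to consolidate the derivation carried out in the paragraphs immediately preceding the theorem, which identifies corank $2$ points of $F$ with $\Sigma^{2,2}$ points of the corank $1$ germ $H$ whose discriminant is the image of $F$. By the result of Fukui, Nu\~no-Ballesteros and Saia \cite{FNBS_98}, these are cut out by the ideal
\[
J_{2,2}(H)=\langle h_v,\,h_w,\,h_{vv},\,h_{vw},\,h_{ww}\rangle.
\]
What is left to do is (i) push this ideal from the enlarged source $(u,s,t,v,w)$ down to $(u,v,w)$, and (ii) identify the resulting ideal with $I_{d+1}(J_F)$.

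For step (i), I would substitute $h=r+\alpha(s-p)+\beta(t-q)$ into the five generators. Lemma \ref{lemma alpha beta} cancels the derivatives of $r$, so that $h_v=h_w=0$ becomes a linear system in $(s-p,t-q)$ whose coefficient matrix is exactly $M$, which is invertible; hence the first two equations are equivalent to $s=p$ and $t=q$. Restricting the three Hessian entries $h_{vv},h_{vw},h_{ww}$ to this slice and again using the lemma to kill the cross terms coming from $r$ yields, up to sign, the three functions displayed in the theorem. Projecting to $(u,v,w)$ along the graph $s=p$, $t=q$ gives the set-theoretic part of the statement.

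For step (ii), I would start from the block computation already recorded, $I_{d+1}(J_F)=\langle p_v,p_w,q_v,q_w,r_v,r_w\rangle$, which collapses to $\langle p_v,p_w,q_v,q_w\rangle$ by the same lemma. Arranging the three functions as the entries of the column vectors $M\,(p_v,q_v)^{\top}$ and $M\,(p_w,q_w)^{\top}$, and using the Hessian symmetry $h_{vw}=h_{wv}$ to identify the two ``middle'' entries, invertibility of $M$ lets one recover each of $p_v,q_v,p_w,q_w$ as an $\ms{O}_n$-combination of the three functions. Therefore the three functions do generate $I_{d+1}(J_F)$, upgrading the set equality to an ideal equality.

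Finally, for $n=3$ we have $d=1$ and work inside the regular local ring $\ms{O}_3$ of dimension three. Whenever $f$ is $\ms{F}$-finite, so that $\ms{O}_3/I_2(J_f)$ is Artinian, Krull's height theorem forces the height of $I_2(J_f)$ to be exactly three, and a three-generated ideal of maximal height in a regular local ring is automatically generated by a regular sequence; the complete intersection claim then follows. The main obstacle I foresee is step (ii): one has to apply the Hessian symmetry at just the right moment so that three rather than four functions genuinely suffice to recover $p_v,q_v,p_w,q_w$ via $M^{-1}$, and this is also the step where the wave-front hypothesis, through the invertibility of $M$ guaranteed by Lemma \ref{lemma alpha beta}, is essential.
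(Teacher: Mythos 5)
Your proposal is correct and follows essentially the same route as the paper, whose proof of Theorem \ref{count D4} is precisely the computation in the paragraphs preceding it: realising the image of $F$ as the discriminant of $H$, writing $h=r+\alpha(s-p)+\beta(t-q)$, using the invertibility of $M$ from Lemma \ref{lemma alpha beta} to reduce $h_v=h_w=0$ to $s=p$, $t=q$, and then reading off the three Hessian entries. Your step (ii) --- recovering $p_v,q_v,p_w,q_w$ from the three functions via the identity $\alpha_wp_v+\beta_wq_v=\alpha_vp_w+\beta_vq_w$ (Hessian symmetry) and the invertibility of $M$ (strictly, of $M^\top$) --- is a worthwhile explicit justification of the ideal equality that the paper leaves implicit when it asserts that $I_{d+1}(J_F)$ has three generators, and your remark that the complete-intersection conclusion needs the quotient to be Artinian (i.e.\ $\ms{F}$-finiteness) is a fair point of care that the theorem's literal statement glosses over.
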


Now let $f\colon (\mb{C}^3,0) \to (\mb{C}^4,0)$ be an $\ms{F}$-finite germ of wave front with stable frontal unfolding $F\colon (\mb{C}^d\times\mb{C}^3,0) \to (\mb{C}^d\times \mb{C}^4,0)$.
Since $f$ is $\ms{F}$-finite, $F$ is also $\ms{F}$-finite, so we can find a representative $F\colon U \to V$ of $F$ such that the restriction $F|\colon U \backslash S \to V\backslash \{0\}$ only exhibits frontal stable singularities (\cite{MNO_Frontals}, Theorem 4.6). 
In particular, the points in $U\backslash S$ where $F$ has corank $2$ will be $D_4$ singularities, as it is the only stable singularity of corank $2$ in dimension $3$.
Combining this with Theorem \ref{conservation}, we arrive to the following

\begin{corollary}
	The number of $D_4$ singularities near the origin on the image of a stable frontal unfolding of $f$ is equal to
		\[\dim_\mb{C}\frac{\ms{O}_3}{\langle p_v, p_w, q_v, q_w\rangle},\]
	where $g_v$, $g_w$ denote the partial derivatives of the function $g$ with respect to $v$ and $w$.
\end{corollary}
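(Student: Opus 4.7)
The plan is to combine Theorem \ref{count D4} (which provides a complete intersection structure on $\ms{O}_3/\langle p_v, p_w, q_v, q_w\rangle$) with the conservation of multiplicity principle (Theorem \ref{conservation}).

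First I would apply Theorem \ref{count D4} directly to $f=(u,p,q,r)$: this identifies the ideal $\langle p_v, p_w, q_v, q_w\rangle$ with the three-generator ideal $\langle \beta_v p_v+\alpha_v q_v, \beta_v p_w+\alpha_v q_w, \beta_w p_w+\alpha_w q_w\rangle$, and since $f$ is $\ms{F}$-finite these three generators form a regular sequence, so $\ms{O}_3/\langle p_v, p_w, q_v, q_w\rangle$ is a Cohen--Macaulay complete intersection of finite $\mb{C}$-dimension, supported at the origin. Next I would introduce a stable frontal unfolding $F\colon(\mb{C}^d\times\mb{C}^3,0)\to(\mb{C}^d\times\mb{C}^4,0)$ of $f$ and apply Theorem \ref{count D4} to $F$ in its $d+3$ source variables: the analogous ideal $\langle P_v, P_w, Q_v, Q_w\rangle$ in $\ms{O}_{d+3}$ retains the three-generator complete intersection structure of codimension $3$, so the associated Cohen--Macaulay module is flat over the parameter ring $\ms{O}_d$.

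Theorem \ref{conservation} (or equivalently, constancy of the fiber $\mb{C}$-dimension in a flat family) then yields that $\dim_\mb{C} \ms{O}_3/\langle p_v, p_w, q_v, q_w\rangle$ equals the sum of local contributions over the corank-$2$ points of $f_t$ for $0<|t|\ll 1$. By stability of $F$, this corank-$2$ locus consists only of $D_4$ singularities, since $D_4$ is the unique stable corank-$2$ model in dimension three. At each such point I would place $f_t$ into the normal form \eqref{D4 parametrisation} via $\ms{A}$-equivalence and compute the ideal directly: substitution gives $\langle p_v, p_w, q_v, q_w\rangle=\langle w, v, 2u+6v, 2w\rangle=\mf{m}_3$, so each $D_4$ point contributes exactly $1$ to the sum and the total equals the number of $D_4$ points.

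The main obstacle lies in the flatness assertion, namely in confirming that the three-generator complete intersection structure from Theorem \ref{count D4} persists uniformly as the parameter $t$ varies, so that the generators form a regular sequence in $\ms{O}_{d+3}$ and cut out a codimension-$3$ complete intersection in every slice. Once this is verified, the conservation of multiplicity together with the local $D_4$ computation delivers the formula.
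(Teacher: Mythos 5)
Your proposal is correct and follows essentially the same route as the paper: Theorem \ref{count D4} supplies the three-generator complete intersection (hence Cohen--Macaulay) structure, Theorem \ref{conservation} spreads the $\mb{C}$-dimension over the corank-$2$ points of the perturbation, and $\ms{F}$-finiteness plus stability identifies those points as $D_4$ singularities. Your explicit check in the normal form \eqref{D4 parametrisation} that $\langle p_v,p_w,q_v,q_w\rangle=\mf{m}_3$ at each $D_4$ point, so each contributes multiplicity $1$, is a step the paper leaves implicit, and the flatness concern you raise is settled by applying Theorem \ref{count D4} to the unfolding $F$ itself.
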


\begin{example}
	Let $f\colon (\mb{C}^3,0) \to (\mb{C}^4,0)$ be the map germ given by
		\[f(u,v,w)=(u, v w ,2 u^n v + 3 u v^2 + 4 v^3 + w^2, u^n v^2 + 2 u v^3 + 3 v^4 + 2 v w^2).\]
	This mapping can be obtained as a pullback of the mapping $(v,w) \mapsto (v w, 4 v^3 + w^2, 3 v^4 + 2 v w^2)$, which unfolds into a $D_5$ singularity.
	Therefore, it is not only a wave front, but also $\ms{F}$-finite.
	
	The ideal $I_2(J_f)$ is equal to $\langle u,v^2,w\rangle$, hence the number of $D_4$ singularities in a stable frontal deformation of $f$ is equal to $2$, regardless of $n$.
\end{example}





\begin{thebibliography}{10}

\bibitem{Arnold_72}
V.~I. Arnol'd.
\newblock Normal forms of functions near degenerate critical points, the {W}eyl
  groups {$A\sb{k},D\sb{k},E\sb{k}$} and {L}agrangian singularities.
\newblock {\em Funkcional. Anal. i Prilo\v zen.}, 6(4):3--25, 1972.

\bibitem{Arnold_I}
V.~I. Arnold, S.~M. Gusein-Zade, and A.~N. Varchenko.
\newblock {\em Singularities of differentiable maps. {V}olume 1}.
\newblock Modern Birkh\"{a}user Classics. Birkh\"{a}user/Springer, New York,
  2012.
\newblock Classification of critical points, caustics and wave fronts,
  Translated from the Russian by Ian Porteous based on a previous translation
  by Mark Reynolds, Reprint of the 1985 edition.

\bibitem{FukudaIshikawa}
Takuo Fukuda and Goo Ishikawa.
\newblock On the number of cusps of stable perturbations of a plane-to-plane
  singularity.
\newblock {\em Tokyo J. Math.}, 10(2):375--384, 1987.

\bibitem{FNBS_95}
T.~Fukui, J.~J. Nu\~{n}o-Ballesteros, and M.~J. Saia.
\newblock Counting singularities in stable perturbations of map-germs.
\newblock Number 926, pages 1--20. 1995.
\newblock With an appendix in Japanese, Geometric aspects of real singularities
  (Japanese) (Kyoto, 1995).

\bibitem{FNBS_98}
T.~Fukui, J.~J. Nu\~{n}o-Ballesteros, and M.~J. Saia.
\newblock On the number of singularities in generic deformations of map germs.
\newblock {\em J. London Math. Soc. (2)}, 58(1):141--152, 1998.

\bibitem{MondGaffney}
T.~Gaffney and D.~M.~Q. Mond.
\newblock Cusps and double folds of germs of analytic maps {${\bf C}^2\to{\bf
  C}^2$}.
\newblock {\em J. London Math. Soc. (2)}, 43(1):185--192, 1991.

\bibitem{MondGaffney2}
T.~Gaffney and D.~M.~Q. Mond.
\newblock Weighted homogeneous maps from the plane to the plane.
\newblock {\em Math. Proc. Cambridge Philos. Soc.}, 109(3):451--470, 1991.

\bibitem{Ishikawa_Survey}
Goo Ishikawa.
\newblock Recognition problem of frontal singularities.
\newblock {\em J. Singul.}, 21:149--166, 2020.

\bibitem{MMR_Schemes}
W.~L. Marar, J.~A. Montaldi, and M.~A.~S. Ruas.
\newblock Multiplicities of zero-schemes in quasihomogeneous corank-{$1$}
  singularities {$\bold C^n\to\bold C^n$}.
\newblock In {\em Singularity theory ({L}iverpool, 1996)}, volume 263 of {\em
  London Math. Soc. Lecture Note Ser.}, pages 353--367. Cambridge Univ. Press,
  Cambridge, 1999.

\bibitem{MararTari}
W.~L. Marar and F.~Tari.
\newblock On the geometry of simple germs of co-rank {$1$} maps from
  {$\mathbf{R}^3$} to {$\mathbf{R}^3$}.
\newblock {\em Math. Proc. Cambridge Philos. Soc.}, 119(3):469--481, 1996.

\bibitem{Mond_Classification}
D. Mond.
\newblock On the classification of germs of maps from {${\bf R}^2$} to {${\bf
  R}^3$}.
\newblock {\em Proc. London Math. Soc. (3)}, 50(2):333--369, 1985.

\bibitem{MondNuno}
D. Mond and J.~J. Nu\~{n}o-Ballesteros.
\newblock {\em Singularities of mappings---the local behaviour of smooth and
  complex analytic mappings}, volume 357 of {\em Grundlehren der mathematischen
  Wissenschaften [Fundamental Principles of Mathematical Sciences]}.
\newblock Springer, Cham, [2020] \copyright 2020.

\bibitem{MNO_Frontals}
C.~Mu\~{n}oz-Cabello, J.~J. Nu\~{n}o-Ballesteros, and R.~Oset~Sinha.
\newblock Deformations of corank 1 frontals.
\newblock {\em Proc. Roy. Soc. Edinburgh Sect. A}, 154(4):1204--1234, 2024.

\bibitem{MNO_Surfaces}
C.~Mu\~{n}oz-Cabello, J.~J. Nu\~{n}o-Ballesteros, and R.~Oset~Sinha.
\newblock Singularities of frontal surfaces.
\newblock {\em Hokkaido Math. J.}, 53(1):175--208, 2024.

\bibitem{MNO_Mond}
C.~Mu\~{n}oz-Cabello, J.~J. Nu\~{n}o-Ballesteros, and R.~Oset~Sinha.
\newblock A {P}roof of the {M}ond {C}onjecture for {W}ave {F}ronts.
\newblock {\em Bull. Braz. Math. Soc. (N.S.)}, 56(2):Paper No. 21, 2025.

\bibitem{Rieger}
J.~H. Rieger.
\newblock Families of maps from the plane to the plane.
\newblock {\em J. London Math. Soc. (2)}, 36(2):351--369, 1987.

\end{thebibliography}
\end{document}